\documentclass[a4paper,12pt]{amsart}
\usepackage[margin=3cm]{geometry}
\parindent=0cm
\usepackage[applemac]{inputenc}
\usepackage{eurosym}

\newtheorem{theorem}{Theorem}[section]
\newtheorem{rem} [theorem] {Remark}
\newtheorem{prop} [theorem]{Proposition}

\newtheorem{definition}[theorem]{Definition}
\newcommand{\ovprt}{\overline{\partial}}
\newcommand{\ovli}{\overline}
\newcommand{\dquer}{\overline\partial}
\newcommand{\dquers}{\overline\partial ^*_\varphi}
\newcommand{\boxphi}{\square_\varphi}
\newcommand{\levim}{\frac{\partial^2\varphi}{\partial z_j\partial\overline z_k}}

\pagestyle{plain}
\numberwithin{equation}{section}
\begin{document}
\title{Compactness  of  the  $\ovprt $ - Neumann operator on weighted $(0,q)$- forms.}

\author{ Friedrich Haslinger}


 \address{ F. Haslinger: Institut f\"ur Mathematik, Universit\"at Wien,
Nordbergstrasse 15, A-1090 Wien, Austria}
\email{ friedrich.haslinger@univie.ac.at}
\keywords{$\ovprt $-Neumann problem, Sobolev spaces, compactness}
\subjclass[2000]{Primary 32W05; Secondary 32A36, 35J10}

\maketitle

\begin{abstract} ~\\
 As an application of a new characterization of   compactness of the $\ovprt $-Neumann operator we derive a sufficient condition  for compactness  of the $\ovprt $- Neumann operator  on $(0,q)$-forms in weighted $L^2$-spaces on $\mathbb{C}^n.$

\end{abstract}

\section{Introduction.}~\\

 In this paper we continue the investigations of \cite{HaHe} ans \cite{Has5} concerning  existence and compactness of the canonical solution operator to $\ovprt $ on weighted
$L^2$-spaces over $\mathbb C^n.$

Let $\varphi : \mathbb C^n \longrightarrow \mathbb R^+ $ be a plurisubharmonic $\mathcal C^2$-weight function and define the space
$$L^2(\mathbb C^n , \varphi )=\{ f:\mathbb C^n \longrightarrow \mathbb C \ : \ \int_{\mathbb C^n}
|f|^2\, e^{-\varphi}\,d\lambda < \infty \},$$
where $\lambda$ denotes the Lebesgue measure, the space $L^2_{(0,q)}(\mathbb C^n, \varphi )$ of $(0,q)$-forms with coefficients in
$L^2(\mathbb C^n , \varphi ),$ for $1\le q \le n.$ 
Let 
$$( f,g)_\varphi=\int_{\mathbb{C}^n}f \,\overline{g} e^{-\varphi}\,d\lambda$$
denote the inner product and 
$$\| f\|^2_\varphi =\int_{\mathbb{C}^n}|f|^2e^{-\varphi}\,d\lambda $$
the norm in $L^2(\mathbb C^n , \varphi ).$

We consider the weighted
$\ovprt $-complex 
$$
L^2_{(0,q-1)}(\mathbb C^n , \varphi )\underset{\underset{\ovprt_\varphi^* }
\longleftarrow}{\overset{\ovprt }
{\longrightarrow}} L^2_{(0,q)}(\mathbb C^n , \varphi )
\underset{\underset{\ovprt_\varphi^* }
\longleftarrow}{\overset{\ovprt }
{\longrightarrow}} L^2_{(0,q+1)}(\mathbb C^n , \varphi ),
$$
where 
 for $(0,q)$-forms $u=\sum_{|J|=q}' u_J\,d\overline z_J$ with coefficients in $\mathcal{C}_0^\infty (\mathbb{C}^n)$ we have
$$\ovprt u = \sum_{|J|=q}\kern-1pt{}^{\prime}  \, \sum_{j=1}^n \frac{\partial u_J}{\partial \ovli z_j}\, d\ovli z_j \wedge d \ovli z_J,$$
and 
$$ \ovprt_\varphi ^* u = -  \sum_{|K|=q-1}\kern-7pt{}^{\prime}  \, \sum_{k=1}^n \delta_k u_{kK}\,d\ovli z_K,$$
where $\delta_k=\frac{\partial}{\partial z_k}-\frac{\partial \varphi}{\partial z_k}.$

There is an interesting  connection between $\ovprt $ and the theory of Schr\"odinger operators\index{Schr\"odinger operator} with magnetic fields,
see for example \cite{Ch}, \cite{B}, \cite{FS3} and \cite{ChF} for recent contributions exploiting this point of view.

The complex Laplacian on $(0,q)$-forms is defined as
$$\boxphi := \dquer  \,\dquers + \dquers \dquer,$$
where the symbol $\boxphi $ is to be understood as the maximal closure of the operator initially defined on forms with coefficients in $\mathcal{C}_0^\infty$, i.e., the space of smooth functions with compact support.

$\boxphi $ is a selfadjoint and positive operator, which means that 
$$( \boxphi f,f)_\varphi \ge 0 \ , \   {\text{for}} \  f\in dom (\boxphi ).$$
The associated Dirichlet form is denoted by 
\begin{equation}\label{diri}
Q_\varphi (f,g)= ( \dquer f,\dquer g)_\varphi + ( \dquers f ,\dquers g)_\varphi, 
\end{equation}
for $f,g\in dom (\dquer ) \cap dom (\dquers ).$ The weighted $\dquer $-Neumann operator
$N_{\varphi, q} $ is - if it exists - the bounded inverse of $\boxphi .$ 
\vskip 0.5 cm
We indicate that a $(0,1)$-form $f=\sum_{j=1}^n f_j\,d\ovli z_j$ belongs to  $dom(\dquers)$ if and only if 
\begin{equation*}
 \sum_{j=1}^n\left ( \frac{\partial f_j}{\partial z_j}- \frac{\partial \varphi}{\partial z_j}\, f_j \right )
 \in L^2(\mathbb{C}^n, \varphi )
 \end{equation*}
  and that forms with coefficients in $\mathcal{C}_0^\infty(\mathbb{C}^n)$ are dense in $dom(\dquer)\cap dom(\dquers)$ in the graph norm $f\mapsto (\Vert \dquer f\Vert _\varphi^2+\Vert \dquers f\Vert _\varphi^2)^\frac{1}{2}$ (see \cite{GaHa}).
\vskip 0.5 cm
We consider  the Levi - matrix 
$$M_\varphi=\left(\levim\right)_{jk}$$
of $\varphi $ and suppose that
 the sum $s_q$ of any $q$ (equivalently: the smallest $q$) eigenvalues of $M_\varphi$ satisfies
\begin{equation}\label{sq}
\liminf_{|z|\to \infty}s_q(z) > 0.
\end{equation}
We show that (\ref{sq}) implies that there exists a continuous linear operator
$$N_{\varphi, q} : L^2_{(0,q)}( \mathbb{C}^n, \varphi) \longrightarrow L^2_{(0,q)}( \mathbb{C}^n, \varphi),$$
such that $\boxphi \circ N_{\varphi,q} u = u,$ for any $u\in L^2_{(0,q)}( \mathbb{C}^n, \varphi).$
\vskip 0.3 cm
If we suppose that that the sum $s_q$ of any $q$ (equivalently: the smallest $q$) eigenvalues of $M_\varphi$ satisfies
\begin{equation}\label{sqcomp}
\lim_{|z|\to \infty}s_q(z) =\infty .
\end{equation}
Then the $\ovprt$-Neumann operator $N_{\varphi, q} : L^2_{(0,q)}( \mathbb{C}^n, \varphi) \longrightarrow L^2_{(0,q)}( \mathbb{C}^n, \varphi)$ is compact.

This generalizes results from \cite{HaHe} and \cite{Has5}, where the case of  $q=1$ was handled.

Finally we discuss some examples in $\mathbb{C}^2.$

\vskip 1 cm

\section{The weighted Kohn-Morrey formula}~\\

\vskip 0.5 cm
First we compute 
$$( \boxphi u,u)_\varphi =  \| \ovprt u \|^2_\varphi + \| \ovprt_\varphi ^* u\|^2_\varphi $$
for $u\in dom (\boxphi ).$

We obtain 
\begin{align*}\| \ovprt u \|^2_\varphi + \| \ovprt_\varphi ^* u\|^2_\varphi &= \sum_{|J|=|M|=q}\kern-10pt{}^{\prime} \kern8pt   \sum_{j,k=1}^n\, \epsilon_{jJ}^{kM}\, \int_{\mathbb{C}^n}\frac{\partial u_J}{\partial \ovli z_j}
\ovli{\frac{\partial u_M}{\partial \ovli z_k}}\, e^{-\varphi}\,d\lambda \\
&+  \sum_{|K|=q-1}\kern-7pt{}^{\prime}  \, \sum_{j,k=1}^n\int_{\mathbb{C}^n}\, \delta_j u_{jK} \ovli{\delta_k u_{kK}}\, e^{-\varphi}\,d\lambda,
\end{align*}
where $ \epsilon_{jJ}^{kM}=0$ if $j\in J$ or $k\in M$ or if ${k} \cup M \neq {j} \cup J,$ and equals the sign of the permutation $\binom{kM}{jJ}$ otherwise.
The right-hand side  of the last formula can be rewritten as 
$$\sum_{|J|=q}\,^{'} \, \sum_{j=1}^n\, \left \| \frac{\partial u_J}{\partial \ovli z_j}\right \|_\varphi ^2 +
 \sum_{|K|=q-1}\kern-10pt{}^{\prime}  \, \sum_{j,k=1}^n \int_{\mathbb{C}^n}\left (  \delta_j u_{jK} \ovli{\delta_k u_{kK}} - 
 \frac{\partial u_{jK}}{\partial \ovli z_k}
\ovli{\frac{\partial u_{kK}}{\partial \ovli z_j}} \right )\, e^{-\varphi}\,d\lambda,$$
see \cite{Str} Proposition 2.4 for the details. Now we mention that 
 for $f,g\in \mathcal{C}^\infty_0(\mathbb{C}^n)$ we have
$$\left ( \frac{\partial f}{\partial \overline{z}_k}, g\right )_\varphi =
- ( f, \delta_kg )_\varphi
$$
and hence
$$\left ( \left [ \delta_j, \frac{\partial}{\partial \overline z_k} \right ]u_{jK}, u_{kK} \right )_\varphi =- \left ( \frac{\partial u_{jK}}{\partial \overline z_k}, \frac{\partial u_{kK}}{\partial \overline z_j} \right )_\varphi + ( \delta_j u_{jK}, \delta_k u_{kK} )_\varphi.$$
Since
$$ \left [ \delta_j,\frac{\partial}{\partial\overline{z}_k} \right ]= 
\frac{\partial^2 \varphi}{\partial z_j \partial\overline{z}_k},$$

 we get
\begin{equation}\label{komo6}
\| \ovprt u \|^2_\varphi + \| \ovprt_\varphi ^* u\|^2_\varphi = 
\sum_{|J|=q}\,^{'} \, \sum_{j=1}^n\, \left \| \frac{\partial u_J}{\partial \ovli z_j}\right \|_\varphi ^2 +
 \sum_{|K|=q-1}\kern-7pt{}^{\prime}   \, \, \sum_{j,k=1}^n \int_{\mathbb{C}^n} 
 \frac{\partial^2 \varphi}{\partial z_j \partial\overline{z}_k}\,u_{jK}\overline{u}_{kK}\, e^{-\varphi}\,d\lambda.
\end{equation}
Formula (\ref{komo6}) is a version of the Kohn -Morrey formula, compare \cite{Str} or \cite{McN2}.
\vskip 0.3 cm
\begin{prop}\label{weiq}
Let $1\le q \le n$ and suppose that the sum $s_q$ of any $q$ (equivalently: the smallest $q$) eigenvalues of $M_\varphi$ satisfies
\begin{equation}\label{sq}
\liminf_{|z|\to \infty}s_q(z) > 0.
\end{equation}
Then there exists a uniquely determined bounded linear operator
$$N_{\varphi, q} : L^2_{(0,q)}( \mathbb{C}^n, \varphi) \longrightarrow L^2_{(0,q)}( \mathbb{C}^n, \varphi),$$
such that $\boxphi \circ N_{\varphi,q} u = u,$ for any $u\in L^2_{(0,q)}( \mathbb{C}^n, \varphi).$
\end{prop}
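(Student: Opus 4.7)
\emph{Proof plan.} The strategy is to establish the \emph{basic estimate}
$$\|u\|_\varphi^2 \;\le\; C\, Q_\varphi(u,u),\qquad u\in dom(\dquer)\cap dom(\dquers),$$
which produces a spectral gap at $0$ for the nonnegative self-adjoint operator $\boxphi$; the bounded inverse $N_{\varphi,q}$ then follows from the spectral theorem (equivalently, Lax--Milgram).

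Diagonalising the Hermitian matrix $M_\varphi(z)$ at each point and using that $s_q(z)$ is the sum of the $q$ smallest eigenvalues, the Levi contribution in (\ref{komo6}) is bounded below pointwise by $s_q(z)|u(z)|^2$. Retaining the nonnegative first sum in (\ref{komo6}) gives
$$Q_\varphi(u,u)\;\ge\;\sum_{|J|=q}{}'\sum_{j=1}^n \Big\|\frac{\partial u_J}{\partial \ovli z_j}\Big\|_\varphi^2 + \int_{\mathbb{C}^n} s_q(z)\,|u(z)|^2\, e^{-\varphi}\,d\lambda.$$
By the density of forms with $\mathcal{C}_0^\infty$ coefficients in $dom(\dquer)\cap dom(\dquers)$ in the graph norm, recorded in the introduction, it suffices to verify the basic estimate on such test forms.

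I would argue by contradiction. Fix $R,c_0>0$ with $s_q\ge c_0$ on $\{|z|>R\}$, permitted by (\ref{sq}), and suppose there is a sequence $u_n$ of forms with $\mathcal{C}_0^\infty$ coefficients, $\|u_n\|_\varphi=1$, and $Q_\varphi(u_n,u_n)\to 0$. The previous display forces $\int_{|z|>R}|u_n|^2 e^{-\varphi}\,d\lambda\to 0$ and $\|\partial u_{n,J}/\partial\ovli z_j\|_\varphi\to 0$ for all $J,j$. Applying the integration-by-parts inequality $\|\partial f/\partial z_j\|_\varphi^2 \le C_R\big(\|\partial f/\partial\ovli z_j\|_\varphi^2 + \|f\|_\varphi^2\big)$ for $f\in\mathcal{C}_0^\infty(B_{2R})$---a direct consequence of the commutator $[\delta_j,\partial/\partial\ovli z_j]=\partial^2\varphi/\partial z_j\partial\ovli z_j$ already used above, combined with Cauchy--Schwarz---to $\chi u_{n,J}$ for a cutoff $\chi\in\mathcal{C}_0^\infty(B_{2R})$, $\chi\equiv 1$ on $B_R$, one obtains an $H^1(B_R)$-bound on each coefficient of $u_n$. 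Rellich--Kondrachov then extracts a subsequence converging in $L^2(B_R)$; combined with the vanishing outside $B_R$ this yields $u_n\to u$ strongly in $L^2_{(0,q)}(\mathbb{C}^n,\varphi)$ with $\|u\|_\varphi=1$ and $u$ supported in $\ovli{B_R}$. Closedness of $\dquer$ and $\dquers$ gives $\dquer u=0$ and $\dquers u=0$, so $\boxphi u=0$. Since $\boxphi$ is elliptic with $\mathcal{C}^2$ coefficients, $u$ is smooth; as $u\equiv 0$ on the open set $\{|z|>R\}$, strong unique continuation (Aronszajn) forces $u\equiv 0$, contradicting $\|u\|_\varphi=1$.

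The main obstacle is precisely this contradiction step---upgrading the Kohn--Morrey $\dquer$-control to a full $H^1_{loc}$-bound so Rellich applies, and then killing the compact limit via elliptic regularity and unique continuation. Once the basic estimate is secured, $Q_\varphi$ is a continuous coercive Hermitian form on $dom(\dquer)\cap dom(\dquers)$, and Lax--Milgram supplies for each $v\in L^2_{(0,q)}(\mathbb{C}^n,\varphi)$ a unique $w$ with $Q_\varphi(w,\cdot)=(v,\cdot)_\varphi$; the map $v\mapsto w$ is the desired bounded operator $N_{\varphi,q}$, and $\boxphi\circ N_{\varphi,q}=\mathrm{id}$ follows.
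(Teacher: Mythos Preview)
Your overall architecture matches the paper's: bound the Levi term pointwise from below by $s_q(z)|u(z)|^2$ via diagonalisation of $M_\varphi$, deduce the basic estimate $\|u\|_\varphi^2\le C\,Q_\varphi(u,u)$, and then invoke Riesz/Lax--Milgram on the form domain to produce the bounded inverse $N_{\varphi,q}$. The paper follows exactly this line, but it simply \emph{asserts} the basic estimate as an immediate consequence of (\ref{komo6}) and the eigenvalue bound, without discussing the region where $s_q$ may vanish; your contradiction argument via localisation, Rellich compactness, and unique continuation is therefore an elaboration of the same step rather than a genuinely different route.

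That said, the machinery you bring in at the end is heavier than necessary, and the appeal to Aronszajn for the coupled system $\boxphi$ on $(0,q)$-forms is not entirely innocent. Once you have a limit $u\in dom(\dquer)\cap dom(\dquers)$ with $Q_\varphi(u,u)=0$, the Kohn--Morrey identity (\ref{komo6}) itself---valid on the form domain by the density you already quoted---forces both summands on the right to vanish, so $\partial u_J/\partial\ovli z_j=0$ in $L^2_\varphi$ for every $j$ and $J$. Thus each coefficient $u_J$ is an entire holomorphic function; since you have already shown $u\equiv 0$ on the open set $\{|z|>R\}$, the identity theorem for holomorphic functions gives $u\equiv 0$ at once. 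This replaces elliptic regularity for $\boxphi$ and strong unique continuation by a one-line argument intrinsic to the $\dquer$-setting.
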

\begin{proof} Let $\mu_{\varphi ,1}\le   \mu_{\varphi ,2} \le \dots \le \mu_{\varphi , n}$ denote the eigenvalues of $M_\varphi$ and suppose that $M_\varphi$ is diagonalized. Then, in a suitable basis, 
\begin{align*}
 \sum_{|K|=q-1}\kern-7pt{}^{\prime}  \, \sum_{j,k=1}^n  
 \frac{\partial^2 \varphi}{\partial z_j \partial\overline{z}_k}\,u_{jK}\overline{u}_{kK}& =  \sum_{|K|=q-1}\kern-7pt{}^{\prime} \,  \, \sum_{j=1}^n \mu_{\varphi , j} |u_{jK}|^2\\
 &= \sum_{J=(j_1, \dots , j_q)}\kern-10pt{}^{\prime} \,  (\mu_{\varphi , j_1} + \dots + \mu_{\varphi , j_q} )|u_J|^2 \\
 &\ge s_q |u|^2
\end{align*}
It follows from (\ref{komo6}) that there exists a constant $C>0$ such that
\begin{equation}\label{weigh6}
\|u\|^2_\varphi \le C( \| \ovprt u \|^2_\varphi + \| \ovprt_\varphi ^* u\|^2_\varphi )
\end{equation}
for each $(0,q)$-form $u\in $dom\,$(\ovprt ) \,\cap$
dom\,$(\ovprt_\varphi^*).$
For a given $v \in  L^2_{(0,q)}( \mathbb{C}^n, \varphi)$ consider the linear functional $L$ on  dom\,($\ovprt) \,\cap$ dom\,($\ovprt_\varphi^*)$ given by $L(u)=(u,v)_\varphi .$ Notice that   dom\,($\ovprt) \,\cap$ dom\,($\ovprt_\varphi^*)$ is a Hilbertspace in the inner product $Q_\varphi .$ 
Since we have by  \ref{weigh6} 
$$|L(u)| = |(u,v)_\varphi | \le  \|u\|_\varphi \,  \|v\|_\varphi \le C Q_\varphi (u,u)^{1/2} \,  \|v\|_\varphi .$$
Hence by the Riesz reprentation theorem there exists a uniquely determined $(0,q)$-form $N_{\varphi, q} v $ such that
$$(u,v)_\varphi = Q_\varphi (u, N_{\varphi, q} v )= (\ovprt u, \ovprt N_{\varphi,q} v)_\varphi + (\ovprt_\varphi^* u, \ovprt_\varphi^* N_{\varphi, q} v )_\varphi,$$
from which we immediately get that $\boxphi \circ N_{\varphi,q} v = v,$ for any $v\in L^2_{(0,q)}( \mathbb{C}^n, \varphi).$
If we set $u=N_{\varphi, q} v$ we get again from \ref{weigh6}
$$\|\ovprt N_{\varphi, q} v \|_\varphi ^2 + \|\ovprt_\varphi^* N_{\varphi, q} v \|_\varphi ^2 = Q_\varphi (N_{\varphi, q} v , N_{\varphi, q} v )=(N_{\varphi, q} v , v)_\varphi \le \| N_{\varphi, q} v \|_\varphi \, \| v \|_\varphi $$
$$\le C_1 ( \|\ovprt N_{\varphi, q} v \|_\varphi ^2 + \|\ovprt_\varphi^* N_{\varphi, q} v \|_\varphi ^2 )^{1/2} \ \|v\|_\varphi ,$$
hence 
$$( \|\ovprt N_{\varphi, q} v \|_\varphi ^2 + \|\ovprt_\varphi^* N_{\varphi, q} v \|_\varphi ^2 )^{1/2} \le C_2 \|v\|_\varphi $$
and finally again by \ref{weigh6}
$$\|N_{\varphi, q} v\|_\varphi  \le C_3 (\|\ovprt N_{\varphi, q} v \|_\varphi ^2 + \|\ovprt_\varphi^* N_{\varphi, q} v \|_\varphi ^2)^{1/2} \le  C_4 \|v\|_\varphi , $$
where $C_1, C_2, C_3, C_4>0$ are constants. Hence we get that $N_{\varphi, q}$ is a continuous linear operator from $L^2_{(0,q)}( \mathbb{C}^n, \varphi)$ into itself (see also \cite{H} or \cite{ChSh}). 
\end{proof}
\vskip 0.5 cm

\section{Compactness of $N_{\varphi, q}$}~\\

We use a characterization of precompact subsets of $L^2$-spaces, see \cite{AF}:

A bounded subset $\mathcal{A}$ of $L^2(\Omega )$ is precompact\index{precompact} in  $L^2(\Omega )$ if and only if for every $\epsilon >0$ there exists a number $\delta >0$ and a subset $\omega \subset\subset \Omega$ such that for every $u\in  \mathcal{A}$ and $h\in \mathbb{R}^n$ with $|h|<\delta$ both of the following inequalities hold:
\begin{equation}\label{com1}
 (i) \  \int_\Omega | \tilde u (x+h) - \tilde u (x)|^2\, dx < \epsilon^2  \hskip 0.3 cm , \hskip 0.3 cm (ii) \  \int_{\Omega \setminus \overline\omega} |u(x)|^2 \,dx < \epsilon^2.
 \end{equation}

\vskip 0.3 cm

In addition  we define an appropriate Sobolev space and prove compactness of the corresponding embedding, for related settings see \cite{BDH}, \cite{Jo}, \cite{KM} .

\begin{definition} Let
$$\mathcal{W}_q^{Q_\varphi}= \{ u\in L^2_{(0,q)}(\mathbb{C}^n , \varphi) \ : \  \| \ovprt u \|^2_\varphi + \| \ovprt_\varphi ^* u\|^2_\varphi< \infty \}$$ with norm
$$\| u\|_{Q_\varphi}  =  (\| \ovprt u \|^2_\varphi + \| \ovprt_\varphi ^* u\|^2_\varphi )^{1/2}.$$
\end{definition}

{\bf Remark:} $\mathcal{W}_q^{Q_\varphi}$ coincides with the form domain $dom (\dquer ) \cap dom (\dquers )$ of $Q_\varphi $ (see \cite{Ga}, \cite{GaHa} ).

\begin{prop}\label{cp q} 
Let $\varphi$ be a plurisubharmonic\index{plurisubharmonic} $\mathcal C^2$- weight function.
Let $1\le q \le n$ and suppose that the sum $s_q$ of any $q$ (equivalently: the smallest $q$) eigenvalues of $M_\varphi$ satisfies
\begin{equation}\label{sqcomp}
\lim_{|z|\to \infty}s_q(z) =\infty .
\end{equation}
Then $N_{\varphi, q} : L^2_{(0,q)}( \mathbb{C}^n, \varphi) \longrightarrow L^2_{(0,q)}( \mathbb{C}^n, \varphi)$ is compact.
\end{prop}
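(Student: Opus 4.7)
The plan is to reduce compactness of $N_{\varphi,q}$ to compactness of the embedding $\iota:\mathcal{W}_q^{Q_\varphi}\hookrightarrow L^2_{(0,q)}(\mathbb{C}^n,\varphi)$. The concluding computation in the proof of Proposition~\ref{weiq} already shows that $N_{\varphi,q}$ is continuous from $L^2_{(0,q)}(\mathbb{C}^n,\varphi)$ into $\mathcal{W}_q^{Q_\varphi}$, so once $\iota$ is compact the composition $N_{\varphi,q}=\iota\circ N_{\varphi,q}$ will be compact on $L^2_{(0,q)}(\mathbb{C}^n,\varphi)$.

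To show $\iota$ is compact I would fix a bounded subset $\mathcal{A}\subset\mathcal{W}_q^{Q_\varphi}$ and verify the Riesz--Kolmogorov criterion (\ref{com1}) for the family $\widetilde{\mathcal{A}}:=\{e^{-\varphi/2}u:u\in\mathcal{A}\}\subset L^2_{(0,q)}(\mathbb{C}^n)$ applied componentwise (one has $\|u\|_\varphi=\|e^{-\varphi/2}u\|_{L^2}$). The new hypothesis (\ref{sqcomp}) enters directly in condition (ii): given $\epsilon>0$, choose $R$ so that $s_q(z)\ge 1/\epsilon$ for $|z|>R$, and combine the pointwise inequality
$$\sum_{|K|=q-1}{}^{\prime}\,\sum_{j,k=1}^n \frac{\partial^2\varphi}{\partial z_j\partial\overline{z}_k}\,u_{jK}\overline{u}_{kK}\ge s_q\,|u|^2$$
used in the proof of Proposition~\ref{weiq} with the Kohn--Morrey formula (\ref{komo6}) to obtain
$$\int_{|z|>R}|u|^2 e^{-\varphi}\,d\lambda \le \epsilon\,Q_\varphi(u,u),$$
uniformly in $u\in\mathcal{A}$, which is (ii) with $\omega=B_R$.

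For condition (i) I would work locally on $B_{R+1}$, where $e^{-\varphi}$ and the first-order multipliers $\partial\varphi/\partial z_k$ distinguishing $\dquers$ from the unweighted $\dquer^*$ are bounded since $\varphi\in\mathcal{C}^2$. A cutoff $\chi\in\mathcal{C}_c^\infty(B_{R+1})$ with $\chi\equiv 1$ on $B_R$, the unweighted Kohn--Morrey identity applied to $\chi u$, and the $\mathcal{C}_c^\infty$-identity $\sum_j\|\partial f/\partial z_j\|^2=\sum_j\|\partial f/\partial\overline{z}_j\|^2$ (which upgrades $\dquer$-control to full gradient control on compactly supported functions) together produce an $H^1(B_R)$-bound on the coefficients of $u$ in terms of $\|u\|_{Q_\varphi}^2+\|u\|_\varphi^2$. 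The standard $L^2$-continuity of translations on bounded $H^1$-sets then delivers (i) for $\widetilde{\mathcal{A}}$. The main obstacle is the bookkeeping in this step: isolating the principal elliptic part of the system $(\dquer,\dquers)$ from the lower-order terms generated by $\partial\varphi$ and by derivatives of $\chi$, and transferring the $H^1$-estimate between $u$ and $e^{-\varphi/2}u$; however, since $\varphi\in\mathcal{C}^2$, all the constants produced depend only on $R$ and on $\|\varphi\|_{\mathcal{C}^2(B_{R+1})}$, so the bounds are uniform on $\mathcal{A}$. With both (i) and (ii) verified, (\ref{com1}) gives precompactness of $\widetilde{\mathcal{A}}$ in $L^2_{(0,q)}(\mathbb{C}^n)$, equivalently of $\mathcal{A}$ in $L^2_{(0,q)}(\mathbb{C}^n,\varphi)$, which is the required compactness of $\iota$.
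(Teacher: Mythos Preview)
Your proposal is correct and follows essentially the same route as the paper: both arguments reduce compactness of $N_{\varphi,q}$ to compactness of the embedding $\mathcal{W}_q^{Q_\varphi}\hookrightarrow L^2_{(0,q)}(\mathbb{C}^n,\varphi)$, verify the Riesz--Kolmogorov criterion for the unit ball, use the pointwise lower bound $s_q|u|^2$ from Proposition~\ref{weiq} together with (\ref{komo6}) for the tail condition (ii), and invoke interior ellipticity (the paper simply cites G\aa rding's inequality where you sketch the cutoff/$H^1_{\mathrm{loc}}$ argument) for the translation condition (i). The only cosmetic difference is the final step: the paper writes $N_{\varphi,q}=j_{\varphi,q}\circ j_{\varphi,q}^*$, while you use the boundedness of $N_{\varphi,q}:L^2_{(0,q)}\to\mathcal{W}_q^{Q_\varphi}$ established in Proposition~\ref{weiq} and compose with the compact embedding---these are equivalent formulations of the same reduction.
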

\begin{proof}
For $(0,q)$ forms one has by (\ref{komo6}) and Proposition \ref{weiq} that
\begin{equation}\label{compactq}
 \| \ovprt u \|^2_\varphi + \| \ovprt_\varphi ^* u\|^2_\varphi \ge  \int_{\mathbb{C}^n}s_q (z) \, |u(z)|^2\,  e^{-\varphi(z)}\,d\lambda (z) .
\end{equation}
We indicate that the embedding
$$j_{\varphi, q} : \mathcal{W}_q^{Q_\varphi } \hookrightarrow  L^2_{(0,q)}(\mathbb{C}^n, \varphi )$$ is compact by showing that the unit ball of $ \mathcal{W}_q^{Q_\varphi }$ is a precompact subset of $ L^2_{(0,q)}(\mathbb{C}^n, \varphi ),$ which follows by the above mentioned characterization of precompact subsets in $L^2$- spaces with the help of 
G\aa rding's inequality to verify (\ref{com1}) (i)(see for instance \cite{F} or \cite{ChSh}) and  to verify (\ref{com1}) (ii) : we have 
$$\int_{\mathbb{C}^n \setminus \mathbb{B}_R} |u(z)|^2 e^{-\varphi (z)}\,d\lambda (z)\le
\int_{\mathbb{C}^n \setminus \mathbb{B}_R} \frac{s_q(z) |u_(z)|^2}{\inf \{s_q(z)  :  |z|\ge R\}}   e^{-\varphi(z)}  d\lambda (z),$$
which implies by 
(\ref{compactq}) that
$$\int_{\mathbb{C}^n \setminus \mathbb{B}_R} |u(z)|^2 e^{-\varphi (z)}\,d\lambda (z)\le
 \frac{ \|u\|^2_{Q_\varphi}}{\inf \{s_q(z) \, : \, |z|\ge R\}} < \epsilon,$$
 if $R$ is big enough, 
 see \cite{Has5} for the details. 
 
 This together with the fact that 
 $N_{\varphi, q}= j_{\varphi, q} \circ j_{\varphi, q}^*,$ (see \cite{Str}) gives the desired result.

\end{proof}

\vskip 0.5 cm

\begin{rem}
If $q=1$ condition (\ref{sqcomp}) means that the lowest eigenvalue $\mu_{\varphi, 1}$ of $M_\varphi$ satisfies
\begin{equation}\label{com10}
\lim_{|z|\to \infty}\mu_{\varphi, 1} (z) =\infty .
\end{equation}
This implies compactness of $N_{\varphi ,1}$ (see \cite{Has5}).
\end{rem}
\vskip 0.5 cm

{\bf Examples:} a) We consider the plurisubharmonic weight function $\varphi (z,w)= |z|^2 |w|^2 + |w|^4$ on $\mathbb{C}^2.$ 
 The Levi matrix of $\varphi $ has the form 
$$\left(
\begin{array}{cc}
 |w|^2&\overline z w\\
 \overline w z&|z|^2 + 4|w|^2
 \end{array}\right) 
$$
and the eigenvalues are
$$\mu_{\varphi, 1} (z,w)=\frac{1}{2} \left ( 5|w|^2+|z|^2- \sqrt{9|w|^4+10|z|^2 |w|^2 + |z|^4} \right )$$
$$= \frac{16|w|^4}{2 \left ( 5|w|^2+|z|^2+ \sqrt{9|w|^4+10|z|^2 |w|^2 + |z|^4} \right )},$$
and 
$$\mu_{\varphi, 2} (z,w)=\frac{1}{2} \left ( 5|w|^2+|z|^2+ \sqrt{9|w|^4+10|z|^2 |w|^2 + |z|^4} \right ).$$
It follows that (\ref{com10}) fails, since even
$$\lim_{|z|\to \infty} |z|^2\mu_{\varphi, 1} (z,0)=0,$$
but 
$$s_2(z,w)=\frac{1}{4} \, \Delta \varphi (z,w) = |z|^2 + 5 |w|^2,$$
hence (\ref{sqcomp}) is satisfied for $q=2.$

\vskip 0.3 cm
b)In the next example we consider decoupled weights.
Let $n\ge 2$ and 
$$\varphi (z_1,z_2, \dots , z_n)=\varphi (z_1) + \varphi (z_2) + \dots + \varphi (z_n)$$
be a plurisubharmonic decoupled weight function and suppose that $|z_\ell |^2 \Delta \varphi_\ell (z_\ell )\to +\infty $, as $|z_\ell | \to \infty $ for some $\ell \in \{1, \dots , n \}.$ Then the $\ovprt $-Neumann operator$N_{\varphi, 1} $ acting on $L^2_{(0,1)}(\mathbb{C}^n, \varphi)$ fails to be compact (see \cite{HaHe}, \cite{Ga}, \cite{Schn06}).

Finally we discuss two examples in $\mathbb{C}^2:$ for $\varphi (z_1,z_2) = |z_1|^2+|z_2|^2$ all eigenvalues of the Levi matrix are $1$ and $N_{\varphi, 1}$ fails to be compact by the above result on decoupled weights, for the weightfunction $\varphi (z_1,z_2)=|z_1|^4 + |z_2|^4$ the eigenvalues are $4|z_1|^2$ and $4|z_2|^2$ and $N_{\varphi ,1}$ fails to be compact again by the above result, whereas $N_{\varphi ,2}$ is compact by \ref{cp q}.
\vskip 1 cm


\begin{thebibliography}{1}
\bibitem{AF} R.A. Adams and J.J.F. Fournier, \textit{Sobolev spaces.} Pure and Applied Math. Vol.~140, Academic Press, 2006.
\bibitem{B} B. Berndtsson, \textit{$\ovprt$ and Schr\"odinger operators.} Math. Z. \textbf{221} (1996), 401--413.
\bibitem{BDH} P. Bolley, M. Dauge and B. Helffer, \textit{Conditions suffisantes pour l'injection compacte d'espace de Sobolev \`a poids.} S\'eminaire \'equation aux d\'eriv\'ees partielles (France), Universit\'e de Nantes \textbf{1} (1989), 1--14.
\bibitem{ChSh} So-Chin Chen and Mei-Chi Shaw, \textit{Partial differential equations in several complex variables.} Studies in Advanced Mathematics, Vol. 19,  Amer. Math. Soc., 2001.
\bibitem{Ch} M. Christ, \textit{On the $\ovprt $ equation in weighted $L^2$ norms in $\mathbb C^1.$}  J. of Geometric Analysis \textbf{1} (1991), 193--230. 
\bibitem{ChF} M. Christ and S. Fu, \textit{Compactness in the $\ovprt$-Neumann problem, magnetic Schr\"odinger operators, and the Aharonov-Bohm effect.} Adv. Math. \textbf{197} (2005), 1--40.
\bibitem{F} G.B. Folland, \textit{Introduction to partial differential equations.} Princeton University Press, Princeton, 1995.
\bibitem{FS3} S. Fu and E.J. Straube, \textit{Semi-classical analysis of Schr\"odinger operators
and compactness in the $\ovprt $ Neumann problem.} J. Math. Anal. Appl. \textbf{271} (2002), 267--282.
\bibitem{Ga} K. Gansberger, \textit{Compactness of the $\dquer$-Neumann operator.} Dissertation, University of Vienna, 2009.
\bibitem{GaHa} K. Gansberger and F. Haslinger, \textit{Compactness estimates for the $\ovprt $- Neumann problem in weighted $L^2$ - spaces.} Complex Analysis (P. Ebenfelt, N. Hungerb\"uhler, J.J. Kohn, N. Mok, E.J. Straube, eds.), Trends in Mathematics, Birkh\"auser (2010), 159--174.
\bibitem{Has5} F. Haslinger, \textit{Compactness for the $\ovprt $- Neumann problem- a functional analysis approach.} Collectanea Math. (to appear), arXiv: 0912.4406.
\bibitem{HaHe} F. Haslinger and B. Helffer, \textit{Compactness of the solution operator to $\ovprt $ in weighted $L^ 2$ - spaces.} J. of Functional Analysis \textbf{255} (2008), 13--24.
\bibitem{H} L. H\"ormander, \textit{An introduction to complex analysis in several variables.} North Holland, 1990.
\bibitem{Jo} J. Johnsen, \textit{On the spectral properties of Witten Laplacians, their range
  projections and Brascamp-Lieb's inequality.}  Integral Equations Operator Theory \textbf{36} (2000), 288--324.
\bibitem{KM} J.-M. Kneib and F. Mignot, \textit{Equation de Schmoluchowski g\'en\'eralis\'ee.}  
Ann. Math. Pura Appl. (IV) \textbf{167} (1994), 257--298.
\bibitem{McN2} J.D. McNeal, \textit{$L^2$ estimates on twisted Cauchy-Riemann complexes.}
150 years of mathematics at Washington University in St. Louis. Sesquicentennial of mathematics at Washington University, St. Louis, MO, USA, October 3--5, 2003. Providence, RI: American Mathematical Society (AMS). Contemporary Mathematics \textbf{395} (2006), 83--103.
\bibitem{Schn06} G. Schneider, \textit{Non-compactness of the solution operator to
    {$\overline\partial$} on the {F}ock-space in several dimensions.} Math. Nachr. \textbf{278} (2005), 312--317.
\bibitem{Str} E. Straube, \textit{The $L^2$-Sobolev theory of the $\ovprt $-Neumann problem.}     
 ESI Lectures in Mathematics and Physics, EMS, 2010.
\end{thebibliography}
\end{document}